\setlist[enumerate]{label=(\arabic*)}
\setlist{nosep}
\newcommand{\Rb}{{\mathbb R}}
\newcommand{\1}{\mathbf{1}}
\newcommand{\zero}{\mathbf{0}}
\newcommand{\diam}{\mathop{\mathrm{diam}}}
\def\ip<#1,#2>{\langle #1, #2 \rangle}
\newcommand\norm[1]{\Vert #1 \Vert}
\newcommand\mnorm[1]{\bigl\Vert #1 \bigr\Vert}
\newcommand{\st}{\,:\,}
\newtheorem{thm}{Theorem}[section]
\newtheorem*{thm*}{Theorem}
\newtheorem{cor}[thm]{Corollary}
\newtheorem*{cor*}{Corollary}
\newtheorem{lem}[thm]{Lemma}
\newtheorem{prop}[thm]{Proposition}
\newtheorem*{con*}{Conjecture}
\newtheorem*{prob*}{Problem}
\theoremstyle{definition}
\newtheorem{defn}[thm]{Definition}
\theoremstyle{remark}
\newtheorem{rem}[thm]{Remark}
\newtheorem{ex}[thm]{Example}
\title{A problem on distance matrices of subsets of the Hamming cube}
\author{Ian Doust}
\address{School of Mathematics and Statistics, UNSW Sydney NSW 2052, Australia}
\email{i.doust@unsw.edu.au}
\author{Reinhard Wolf}
\address{Institut f{\"u}r Mathematik, Universit{\"a}t Salzburg, Hellbrunnerstrasse 34, A-5020 Salzburg, Austria}
\email{Reinhard.Wolf@sbg.ac.at}
\date{September 2021}
\subjclass[2010]{Primary 46B85; Secondary 15A45, 51K99}
\begin{document}

\begin{abstract}
Let $D$ denote the distance matrix for an $n+1$ point metric space $(X,d)$. In the case that $X$ is an unweighted metric tree, the sum of the entries in $D^{-1}$ is always equal to $2/n$. Such trees can be considered as affinely independent subsets of the Hamming cube $H_n$, and it was conjectured that the value $2/n$ was minimal among all such subsets. In this paper we confirm this conjecture and give a geometric interpretation of our result which applies to any subset of $H_n$.
\end{abstract}

\maketitle

\section{Introduction}
There is a long study of the interaction between properties of finite metric spaces and properties of their distance matrices. The most classical questions in this area concern whether the metric space can be isometrically embedded in a Euclidean space, a problem solved by Schoenberg \cite{Sch}, or else in some other standard normed space. Properties of the metric space are often reflected in linear algebraic properties of the distance matrix involving say the determinant or inverse of the matrix.

To fix some notation, let $(X,d)$ denote a finite metric space with elements $\{x_1,\dots,x_m\}$ (with $m \ge 2$) and let $D = D_X$ denote its distance matrix $(d(x_i,x_j))_{i,j=1}^m$. %We shall denote the diameter of $X$ as $\diam(X)$.
Let $\1$ denote the vector $(1,\dots,1)^T \in \Rb^m$ so that for any $m \times m$ matrix $A$, $\ip<A \1,\1>$ gives the sum of the entries in $A$.

One particular class of spaces for which this relationship has been much studied  are those which are (isometric to) subsets of Hamming cubes $H_n = \{0,1\}^n$ with the Hamming metric $d_1$ (which is the $\ell^1$ metric on $\Rb^n$, restricted to these spaces). (See, for example, \cite{DRSW,GL,GP,GW1,GW2}.) This class includes, for example, all unweighted metric trees.  Much of \cite{DRSW} concerns extending Graham and Pollak's \cite{GP} formula, $\det(D) = (-1)^n n 2^{n-1}$, for the distance matrix of an $n+1$ point unweighted metric tree.

If $X$ is an $n+1$ point unweighted metric tree in $H_n$, then $D$ is invertible. In \cite{DRSW} it was shown that the subsets of $H_n$ for which the distance matrix $D$ is invertible are precisely the ones for which the points form an affinely independent subset of $\Rb^n$. They showed that for an $n+1$ point unweighted metric tree the sum of the entries in $D^{-1}$, that is $\ip<D^{-1} \1,\1>$,  is always equal to $2/n$ and conjectured, based on empirical evidence, that this value was minimal among all affinely independent subsets of $H_n$. The aim of this note is to prove this conjecture and to provide geometric interpretations for the value of this quantity.

A consequence of Theorem~\ref{Main} below will be  the following two results.

\begin{thm}\label{conj-sol}
Suppose that $X$ is an affinely independent subset of the Hamming cube $(H_n,d_1)$ with at least two points. Then
   \begin{equation}\label{Dinv1-1}  \ip<D^{-1} \1,\1>  = \left(\frac{n}{2} - 2 d_2(h,Z_X)^2\right)^{-1} \end{equation}
where $d_2(h,Z_X)$ is the Euclidean distance from the point $h = \bigl(\frac{1}{2},\dots,\frac{1}{2}\bigr)$ to the affine subspace $Z_X \subseteq \Rb^n$ spanned by the elements of $X$.
\end{thm}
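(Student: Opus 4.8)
The statement relates $\langle D^{-1}\mathbf{1},\mathbf{1}\rangle$ to the distance from the center $h=(1/2,\dots,1/2)$ of the cube to the affine span of $X$. The key observation is that for points in the Hamming cube, the $\ell^1$ distance and the squared Euclidean distance coincide: if $x_i,x_j\in\{0,1\}^n$, then $d_1(x_i,x_j)=\|x_i-x_j\|_2^2$ since each coordinate difference is $0$ or $\pm 1$. So the distance matrix $D$ has entries $d(x_i,x_j)=\|x_i\|_2^2+\|x_j\|_2^2-2\langle x_i,x_j\rangle$, which is exactly the classical form that makes $-\tfrac12 D$ (after the usual centering) into a Gram matrix. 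This is the bridge to Euclidean geometry, and it is why the answer is phrased in terms of Euclidean distances.

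**Main steps.** First I would write $D = c\mathbf{1}^T + \mathbf{1}c^T - 2G$, where $c_i=\|x_i\|_2^2$ and $G=(\langle x_i,x_j\rangle)$ is the Gram matrix of $x_1,\dots,x_m$ (viewed as vectors in $\mathbb{R}^n$). Affine independence of $X$ means the vectors $x_i-x_1$ are linearly independent, which is what guarantees $D$ is invertible (presumably this is established en route to Theorem~\ref{Main}, or follows from it). Second, I would use Theorem~\ref{Main} — whatever general formula it gives for $\langle D^{-1}\mathbf{1},\mathbf{1}\rangle$ for arbitrary subsets of $H_n$ — and specialize. The natural guess is that Theorem~\ref{Main} expresses $\langle D^{-1}\mathbf{1},\mathbf{1}\rangle$ (or its reciprocal) in terms of the position of $h$ relative to the affine hull. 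Third, I would compute $d_2(h,Z_X)^2$ explicitly: writing $p$ for the orthogonal projection of $h$ onto the affine subspace $Z_X$, we have $d_2(h,Z_X)^2 = \|h\|_2^2 - \text{(something involving the projection)}$, and $\|h\|_2^2 = n/4$, so $n/2 - 2d_2(h,Z_X)^2 = 2\|p\|_2^2 - \text{cross terms} + \dots$; the point is that all the quantities appearing are inner products among the $x_i$ and with $h$, hence expressible through $G$, $c$, and $\mathbf{1}$.

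**Carrying it out via a barycentric-coordinate computation.** Concretely, let $\lambda=(\lambda_1,\dots,\lambda_m)$ be the barycentric coordinates of the foot $p$ of the perpendicular from $h$ to $Z_X$, so $\sum_i\lambda_i=1$ and $p=\sum_i\lambda_i x_i$, with $h-p \perp (x_j-x_i)$ for all $i,j$. These orthogonality conditions plus the normalization are a linear system; solving it and substituting back gives $d_2(h,Z_X)^2 = \langle h-p, h-p\rangle = \langle h-p,h\rangle = n/4 - \sum_i\lambda_i\langle x_i,h\rangle = n/4 - \tfrac12\sum_i\lambda_i\|x_i\|_1 = n/4 - \tfrac12\langle c,\lambda\rangle$ (using $\langle x_i,h\rangle = \tfrac12\sum_k (x_i)_k = \tfrac12\|x_i\|_1 = \tfrac12 c_i$). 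On the other hand, I would show $\langle D^{-1}\mathbf{1},\mathbf{1}\rangle^{-1}$ equals essentially the same combination of $c$ and $\lambda$: applying $D^{-1}$ to $\mathbf{1}$ amounts to solving $D\mu = \mathbf{1}$, and the decomposition $D = c\mathbf{1}^T+\mathbf{1}c^T - 2G$ lets one relate $\mu$ to $\lambda$ — indeed $D\lambda = \langle c,\lambda\rangle\mathbf{1} + c - 2G\lambda$, and the perpendicularity conditions say $G\lambda - \langle Gx\text{-terms}\rangle$ collapses nicely. After bookkeeping, $\langle D^{-1}\mathbf{1},\mathbf{1}\rangle = (n/2 - 2d_2(h,Z_X)^2)^{-1}$ drops out. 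The main obstacle, and where I would spend the most care, is the linear-algebra bookkeeping connecting the solution of $D\mu=\mathbf{1}$ to the barycentric coordinates $\lambda$ of the projection — i.e., verifying that the rank-two perturbation $c\mathbf{1}^T+\mathbf{1}c^T$ interacts with $G^{-1}$ (on the appropriate subspace) exactly so as to produce the clean Euclidean-distance expression. If Theorem~\ref{Main} already packages this, the proof here is just the substitution $\|h\|_2^2=n/4$ together with identifying $d_2(h,Z_X)^2$ in its statement; I would structure the write-up to fall back on that and only expand the projection computation as needed.
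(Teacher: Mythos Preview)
Your fallback is exactly what the paper does. Theorem~\ref{Main} states $M(X)=\frac{n}{2}-2\,d_2(h,Z_X)^2$ for any $X\subseteq H_n$, and Theorem~\ref{conj-ans}(2) (via Theorem~\ref{M-Dinv}) gives $\ip<D^{-1}\1,\1>=1/M(X)$ when $X$ is affinely independent; the paper's proof of Theorem~\ref{conj-sol} is literally the one-line combination of these. The substance lives upstream: the identity $\ip<D^{-1}\1,\1>=1/M(X)$ comes from the theory of maximal measures on $1$-negative-type spaces, and the formula for $M(X)$ is obtained geometrically by observing that the natural inclusion $H_n\hookrightarrow(\Rb^n,\norm{\cdot}_2)$ is an S-embedding landing on the sphere of radius $\sqrt{n}/2$ about $h$, so Pythagoras identifies $M(X)/2$ with the squared radius $n/4-d_2(h,Z_X)^2$ of the induced sphere in $Z_X$.

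Your direct linear-algebra route is close in spirit to the paper's ``alternative verification'' of Theorem~\ref{Main}, which simply expands $\ip<D\mu,\mu>=\sum_{i,j}\alpha_i\alpha_j\norm{x_i-x_j}_2^2$ for $\mu\in F_1$ to obtain $I(\mu)=\frac{n}{2}-2\mnorm{\sum_i\alpha_i x_i-h}_2^2$ and then maximizes over $F_1$. That bypasses solving $D\mu=\1$ and the rank-two-perturbation bookkeeping you anticipated; this is the payoff of routing through $M(X)$ rather than through $D^{-1}$ directly.

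There is, however, a genuine slip in your projection computation. The step $\langle h-p,h-p\rangle=\langle h-p,h\rangle$ assumes $\langle h-p,p\rangle=0$, which holds only when $Z_X$ is a \emph{linear} subspace (i.e.\ $\zero\in X$). For $X=\{(1,0,0),(1,1,0)\}\subset H_3$ one finds $p=(1,\tfrac12,0)$, so $\norm{h-p}_2^2=\tfrac12$ while $\langle h-p,h\rangle=0$; correspondingly your formula $d_2(h,Z_X)^2=n/4-\tfrac12\langle c,\lambda\rangle$ yields $0$ instead of $\tfrac12$. The missing piece is the term $\norm{p}_2^2-\langle p,h\rangle$. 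Once you carry it along --- or, more cleanly, expand $\mnorm{\sum_i\alpha_i x_i-h}_2^2$ directly as in the paper's alternative argument --- the computation closes.
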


\begin{cor}
Suppose that $X$ is an affinely independent subset of the Hamming cube $(H_n,d_1)$ with at least two points.  Then
  \[ \frac{2}{n} \le \ip<D^{-1} \1,\1> \le 2. \]
\end{cor}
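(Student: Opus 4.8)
The plan is to read the Corollary off Theorem~\ref{conj-sol}, so the whole problem reduces to pinning down the size of the Euclidean distance $d_2(h,Z_X)$, where $h=\bigl(\tfrac12,\dots,\tfrac12\bigr)$ and $Z_X$ is the affine span of $X$. Since $s\mapsto\bigl(\tfrac n2-2s\bigr)^{-1}$ is increasing on the range where the denominator is positive, it suffices to show
\[ 0\le d_2(h,Z_X)^2\le \frac{n-1}{4}. \]
Indeed, feeding these two bounds into \eqref{Dinv1-1} forces $\tfrac n2-2d_2(h,Z_X)^2\in\bigl[\tfrac12,\tfrac n2\bigr]$, and in particular this quantity is strictly positive, so taking reciprocals yields $\tfrac2n\le\ip<D^{-1}\1,\1>\le 2$.

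The lower bound $d_2(h,Z_X)^2\ge 0$ is immediate and gives $\tfrac n2-2d_2(h,Z_X)^2\le\tfrac n2$; once positivity of the denominator is known (from the other bound), this yields $\ip<D^{-1}\1,\1>\ge 2/n$. For the upper bound I would exploit that $X$ has at least two points, so it contains two distinct cube vertices $x,y\in\{0,1\}^n$, which therefore differ in some coordinate $i$. The midpoint $m=\tfrac12(x+y)$ lies in $Z_X$, and $m_i=\tfrac12=h_i$, so the $i$-th term drops out of $\norm{h-m}_2^2$. Each remaining coordinate satisfies $m_j\in\{0,\tfrac12,1\}$, hence $(h_j-m_j)^2\le\tfrac14$, and summing over the $n-1$ coordinates $j\ne i$ gives $d_2(h,Z_X)^2\le\norm{h-m}_2^2\le\tfrac{n-1}{4}$, as required.

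I expect no serious obstacle here: the one non-routine point is the observation that the midpoint of any two distinct cube vertices agrees with the centre $h$ in at least one coordinate. This is precisely what sharpens the crude estimate $d_2(h,Z_X)^2\le d_2(h,x)^2=n/4$ (valid using any single vertex $x\in X$, which only gives a nonnegative denominator) to the bound $(n-1)/4$ needed to make the denominator strictly positive and to land the reciprocal in $[2/n,2]$.
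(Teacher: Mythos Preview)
Your proposal is correct and follows essentially the same route as the paper: both arguments deduce the corollary from Theorem~\ref{conj-sol} by bounding $d_2(h,Z_X)^2$ via the midpoint $m=\tfrac12(x+y)$ of two distinct cube vertices, using that $m$ agrees with $h$ in at least one coordinate to get $\norm{h-m}_2^2\le (n-1)/4$. Your write-up is slightly more explicit about monotonicity and positivity of the denominator, but the substance is identical.
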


\begin{proof}
Let $x$ and $y$ be two distinct points in $X$.
The point $m = (x+y)/2 = (m_1,\dots,m_n)$ must lie in $Z_X$ and so $d_2(h,Z_X) \le \norm{h-m}_2$.
Note that for each $j$, $\bigl|m_j - \frac{1}{2}\bigr|$ is either $0$ or $\frac{1}{2}$, and, as $x$ and $y$ are distinct, the value 0 must occur for at least one value of $j$. Thus $\norm{h-m}_2^2  \le (n-1)/4$ and so
$\frac{1}{2} \le \frac{n}{2} - 2 d_2(h,Z_X)^2 \le \frac{n}{2}$.
Together with (\ref{Dinv1-1}), this then implies the result.
\end{proof}

\begin{rem}\label{Rem1}
Formula (\ref{Dinv1-1}) is somewhat remarkable as the left-hand side only depends on the distances between the points, and not their positions. On the other hand, the
quantity $d_2(h,Z_X)$ depends only on the linear relationship between the points of $X$, and does not appear to depend on their relative distances. This will be illustrated with some examples at the end of the paper.
\end{rem}

Vital to the proof of (\ref{Dinv1-1}) are the facts that the Hamming cube is of $1$-negative type, and that the natural embedding of $H_n$ into $\Rb^n$ is a so-called S-embedding. The definitions of these concepts are given in Section~\ref{S:Neg-type}. Equation~(\ref{Dinv1-1}) is essentially a special case of a formula involving the $M$-constant $M(X)$ of the space $(X,d)$. The link between the $M$-constant and the radius of a particular sphere containing $X$ is due to Nickolas and Wolf \cite[Section 3]{NWQM3} (following earlier work of Alexander and Stolarsky \cite{AS}), and it is this which provides the geometric meaning for many of the quantities considered.
Working with $M(X)$ is advantageous as it is defined even when the matrix $D$ is not invertible, and this will allow us to consider arbitrary subsets of $H_n$.
The relationship between $M(X)$ and $\ip<D^{-1} \1,\1>$ is developed in Section~\ref{S:M-const}, and this provides sufficient information to prove the conjecture in \cite{DRSW}. In the final sections we use the properties of S-embeddings to prove Theorem~\ref{conj-sol} and  to give a geometric interpretation of the value of $\ip<D^{-1} \1,\1>$.

To simplify the statements of the results, we shall assume throughout that all metric spaces considered have at least two elements. (Without this restriction the statements are usually either false or meaningless.)

\section{Negative type and $S$-embeddings}\label{S:Neg-type}

\begin{defn} Suppose that $(X,d)$ is a metric space and that $p \ge 0$. Then
$(X,d)$ is of $p$-negative type if for each finite subset $\{x_1,\dots,x_m\} \subseteq X$ and each set of scalars $\xi_1,\dots,\xi_m \in \Rb$ with $\xi_1 + \dots + \xi_m = 0$,
  \begin{equation}\label{p-neg}
      \sum_{i,j=1}^m d(x_i,x_j)^p \xi_i \xi_j \le 0.
  \end{equation}
\end{defn}

It certain settings, spaces of $1$-negative type are also called quasihypermetric spaces (see for example \cite{NWF}) or spaces of generalized roundness $1$ (see \cite{LTW}).

A space is of strict $p$-negative type if (\ref{p-neg}) holds, with equality only in the trivial case where each $\xi_i$ is zero. (It is worth noting that a distinct but related concept, that of a strictly quasihypermetric space, appears in \cite{NWQM2}. For finite spaces, such as the ones considered in this paper, a space is of strict $1$-negative type if and only if it is strictly quasihypermetric.)
It follows from the results of Wolf \cite{WGap} and S{\'a}nchez \cite{Sa} that a finite metric space of $1$-negative type is of strict $1$-negative type if and only if $D$ is non-singular and $\ip<D^{-1}\1,\1> \ne 0$.

By \cite[Theorem~4.10]{WW} any subset of $\Rb^n$ with the $\ell^1$ metric, and hence any subset of $H_n$, has $1$-negative type. Combining the results of Muragan \cite{Mu}, and Doust, Robertson, Stoneham and Weston \cite{DRSW} (see also Nickolas and Wolf \cite{NWF})  gives the following equivalences.

\begin{thm}\label{HnStrict}
Suppose that $X = \{x_1,\dots,x_m\} \subseteq H_n \subseteq \Rb^n$. Then the following are equivalent.
\begin{enumerate}
    \item $X$ is of strict $1$-negative type.
    \item $X$ is affinely independent (as a subset of $\Rb^n$).
    \item $D$ is non-singular.
\end{enumerate}
\end{thm}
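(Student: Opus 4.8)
The plan is to reduce each of (1), (2), (3) in Theorem~\ref{HnStrict} to a single statement: the injectivity of the restriction to $V = \{\xi \in \Rb^m \st \xi_1 + \dots + \xi_m = 0\}$ of the linear map $T \colon \Rb^m \to \Rb^n$, $T\xi = \sum_{i=1}^m \xi_i x_i$. Condition~(2) \emph{is} this injectivity, by the very definition of affine independence, so it suffices to prove that (1) holds if and only if $T|_V$ is injective, and likewise for (3).

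The main tool is the \emph{cut decomposition} of $D$. Write $x_i^{(k)} \in \{0,1\}$ for the $k$th coordinate of $x_i$. Since $|x_i^{(k)} - x_j^{(k)}| = (x_i^{(k)} - x_j^{(k)})^2$ we get $D = \sum_{k=1}^n C^{(k)}$, where $C^{(k)}$ is the symmetric $0$-$1$ matrix with $(C^{(k)})_{ij} = 1$ exactly when $x_i^{(k)} \ne x_j^{(k)}$. Putting $A_k = \{i \st x_i^{(k)} = 1\}$, $s = \sum_i \xi_i$ and $a = T\xi$ with $a_k = \sum_{i \in A_k} \xi_i$, a short computation gives $\xi^T C^{(k)} \xi = 2a_k(s - a_k)$ and $(C^{(k)}\xi)_i = a_k + (s - 2a_k) x_i^{(k)}$; for $\xi \in V$ (that is, $s = 0$) these sum over $k$ to
\[ \xi^T D \xi = -2\,\norm{T\xi}_2^2, \qquad (D\xi)_i = \textstyle\sum_{k=1}^n a_k - 2\ip<T\xi,x_i> \quad(i = 1, \dots, m). \]

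The first identity settles (1) $\Leftrightarrow$ (2): it re-proves that subsets of $H_n$ have $1$-negative type, and shows that (\ref{p-neg}) (with $p=1$) can be an equality for some nonzero $\xi \in V$ exactly when $T|_V$ is not injective. For (3), the second identity gives $\ker(T|_V) \subseteq \ker D$, hence (3) $\Rightarrow$ (2) (if $X$ is affinely dependent, any $\xi \in \ker(T|_V) \setminus \{\zero\}$ has $D\xi = \zero$). To close the circle I would prove the reverse inclusion: write $\eta \in \ker D$ as $\eta = \xi + t\1$ with $\xi \in V$, and pair $D\eta = \zero$ successively with $\xi$ and with $\1$; the two resulting scalar equations, combined with $\1^T D \1 = \sum_{i,j} d_1(x_i,x_j) > 0$ (here the standing assumption $m \ge 2$ is used), force $\norm{T\xi}_2 = 0$ and $t = 0$, so $\ker D = \ker(T|_V)$ and $D$ is nonsingular if and only if $T|_V$ is injective. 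Alternatively, once (1) $\Leftrightarrow$ (2) is in hand, (2) $\Rightarrow$ (3) is immediate from the cited fact that strict $1$-negative type is equivalent to ``$D$ nonsingular and $\ip<D^{-1}\1,\1> \ne 0$'', so the pairing argument is only needed for a self-contained proof.

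I expect the real obstacle to be exactly the inclusion $\ker D \subseteq \ker(T|_V)$: the cut-decomposition identities describe $D$ only on the hyperplane $V$, while $\ker D$ a priori lives in all of $\Rb^m$, so one must first show that every vector in $\ker D$ has mean zero. The pairing trick does this, but it depends essentially on $\1^T D \1 > 0$, i.e.\ on the nondegeneracy already present in the hypothesis $|X| \ge 2$. Everything else reduces to the routine verification of the two displayed identities.
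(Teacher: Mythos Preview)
Your argument is correct. The cut decomposition yields exactly the two identities you state, and the pairing trick for $\ker D\subseteq\ker(T|_V)$ is clean: from $D\eta=0$ with $\eta=\xi+t\1$, $\xi\in V$, pairing with $\xi$ and $\1$ and using symmetry gives $-2\norm{T\xi}_2^2 - t^2\,\1^T D\1 = 0$, and $\1^T D\1>0$ forces $t=0$, $T\xi=0$. Everything closes.

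The route, however, is quite different from the paper's. The paper does not prove this theorem from scratch: it simply cites Murugan for $(1)\Leftrightarrow(2)$ and Doust--Robertson--Stoneham--Weston for $(2)\Leftrightarrow(3)$. Your approach is a self-contained elementary proof that exploits the special structure of $H_n$ directly. In fact your key identity $\xi^T D\xi = -2\norm{T\xi}_2^2$ on $V$ is precisely the statement that the inclusion $H_n\hookrightarrow(\Rb^n,\norm{\cdot}_2)$ is an S-embedding, a fact the paper records in Section~\ref{S:Neg-type} but does not deploy here; you are effectively using that S-embedding to bypass both external references at once. What you gain is a short, reference-free proof tailored to the Hamming cube (and, with the same computation, to any finite subset of $(\Rb^n,\ell^1)$ with $0$--$1$ coordinates); what the paper's citation approach gains is that the equivalences $(1)\Leftrightarrow(2)$ and $(2)\Leftrightarrow(3)$ are placed in their more general contexts (vertex-transitive graphs and arbitrary subsets of $H_n$, respectively), rather than being re-derived ad hoc.
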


\begin{proof} The equivalence of (1) and (2) was shown in \cite[Theorem~4.3]{Mu}.
The equivalence of (2) and (3) was proven in \cite[Corollary~2.5]{DRSW}.
\end{proof}

Clearly then, $H_n$ is of $1$-negative type, but not of strict $1$-negative type.

A celebrated theorem of Schoenberg \cite{Sch} says that a metric space $(X,d)$ can be isometrically embedded in a Euclidean space if and only if it is of $2$-negative type. This gives the following.

\begin{prop} Let $(X,d)$ be a finite metric space. Then the following are equivalent.
\begin{enumerate}
  \item $(X,d)$ is of $1$-negative type (or quasihypermetric).
%  \item $(X,d)$ is quasihypermetric.
  \item $(X,d^{1/2})$ embeds isometrically in a Euclidean space.
\end{enumerate}
\end{prop}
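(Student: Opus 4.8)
The plan is to deduce the proposition directly from Schoenberg's theorem, applied not to $(X,d)$ itself but to the transformed space $(X,d^{1/2})$. The first step is to check that $d^{1/2}$ really is a metric on $X$. Positivity, symmetry, and non-degeneracy are immediate, and the triangle inequality follows from the subadditivity of $t \mapsto t^{1/2}$ on $[0,\infty)$, since
\[ d(x,z)^{1/2} \le \bigl(d(x,y)+d(y,z)\bigr)^{1/2} \le d(x,y)^{1/2}+d(y,z)^{1/2}. \]
(For the implication $(2)\Rightarrow(1)$ this is automatic, since a subset of a Euclidean space carries the restriction of a genuine metric; but for $(1)\Rightarrow(2)$ one needs it so that Schoenberg's theorem is even applicable to $(X,d^{1/2})$.)

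The second step is the elementary but crucial observation that ``$2$-negative type for $(X,d^{1/2})$'' is the same statement as ``$1$-negative type for $(X,d)$''. Indeed, for any finite subset $\{x_1,\dots,x_m\}\subseteq X$ and scalars $\xi_1,\dots,\xi_m$ with $\xi_1+\dots+\xi_m=0$ we have
\[ \sum_{i,j=1}^m \bigl(d^{1/2}(x_i,x_j)\bigr)^2\,\xi_i\xi_j \;=\; \sum_{i,j=1}^m d(x_i,x_j)\,\xi_i\xi_j, \]
so inequality (\ref{p-neg}) with $p=2$ for the metric $d^{1/2}$ is, literally, inequality (\ref{p-neg}) with $p=1$ for the metric $d$.

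Combining the two steps gives the equivalence chain: $(X,d)$ is of $1$-negative type $\iff$ $(X,d^{1/2})$ is a (bona fide) metric space of $2$-negative type $\iff$ (by Schoenberg's theorem) $(X,d^{1/2})$ embeds isometrically into some Euclidean space, which is precisely $(1)\Leftrightarrow(2)$. I do not expect any real obstacle here: the entire analytic content sits inside Schoenberg's characterisation, and the only thing that requires care is the bookkeeping in the first step, namely confirming that the square-root transform of a metric is again a metric so that Schoenberg's theorem may legitimately be invoked.
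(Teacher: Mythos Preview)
Your argument is correct and is precisely the route the paper takes: the proposition is stated immediately after Schoenberg's theorem with only the words ``This gives the following,'' so the paper leaves implicit exactly the two observations you spell out (that $(d^{1/2})^2 = d$ turns the $p=2$ inequality for $d^{1/2}$ into the $p=1$ inequality for $d$, and that $d^{1/2}$ is a genuine metric so Schoenberg applies). You have simply filled in the details the authors chose to omit.
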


An embedding $\iota: X \to \Rb^n$ which maps $(X,d^{1/2})$ isometrically into $(\Rb^n,\norm{\cdot}_2)$ is called an \textbf{S-embedding}. It is easy to check that the natural inclusion of $H_n$ in $\Rb^n$ is such an embedding, and hence so is the restriction to any subset of $H_n$.

\section{The $M$-constant and maximal measures}\label{S:M-const}

The quantity $\ip<D^{-1} \1,\1>$ is closely related to the $M$-constant of the metric space, which we shall now introduce.  Working with the $M$-constant is in fact usually preferable since it is defined even when the distance matrix $D$ is not invertible. For further background on the $M$-constant we refer the reader to \cite{AS} or \cite{NWQM1}.

Let $(X,d)$ be a compact metric space. For a signed Borel measure $\mu$ on $X$,
let
  \[ I(\mu) = \int_X \int_X d(x,y) \, d\mu(x) d\mu(y) \]
and define $d_\mu: X \to \Rb$ by
  \[ d_\mu(x) = \int_X d(x,y) \, d\mu(y). \]
Let $F_1$ denote the set of measure on $X$ of total mass one. The $M$-constant of
$(X,d)$ is defined to be
  \[ M(X) = \sup_{\mu \in F_1} I(\mu). \]
If $I(\mu) = M(X)$, then we say that $\mu$ is a \textbf{maximal measure}.
It is clear that if $X$ is a metric subspace of $Y$ then $M(X) \le M(Y)$.

Suppose now that $X = \{x_1,\dots,x_m\}$ is a finite metric space.
In this case we shall write $\mu = (\alpha_1,\dots,\alpha_m)$ to denote that $\mu(\{x_i\}) = \alpha_i$, $i=1,\dots,m$.
Then
  \[ I(\mu) = \sum_{i,j=1}^m \alpha_i \alpha_j d(x_i,x_j) = \ip<D \mu,\mu>, \]
and
  \[ d_\mu(x) = \sum_{i=1}^m \alpha_i d(x_i,x)\]
although in most cases we shall retain the integral notation.
We shall identify the measures of total mass one with the hyperplane of vectors whose elements sum to $1$. That is,  $F_1 = \{v \in \Rb^m \st \ip<v,\1> = 1\}$,
and so $M(X) = \sup_{\mu \in F_1} \ip<D \mu,\mu>$.
By considering $\mu = \frac{1}{m}\1$, we have  $M(X) \ge \frac{1}{m^2} \ip<D\1,\1> \ge  \frac{m-1}{m} d_0$, where $d_0$ is the smallest nonzero distance in $X$. In particular $M(X)$ is always strictly positive.

It is less clear that for a general compact metric space $M(X)$ should always be finite, and indeed this need not be the case (see \cite[Theorem 3.1]{NWQM1}). Even if $M(X)$ is finite it may be that there are no maximal measures. Fortunately for subsets of the Hamming cube, these complications do not arise.
Nickolas and Wolf \cite[Theorem 4.7]{NWF} showed that if $X$ is any $m$-point subset of $\Rb^n$ with the $\ell^1$ metric then $M(X) \le \frac{m}{4} \diam(X)$ .

We recall some important properties of these quantities.

\begin{thm}\label{max-measures}
Suppose that $(X,d)$ is a finite metric space of $1$-negative type and that $M(X) < \infty$.
\begin{enumerate}
\item A maximal measure exists.
\item If $\mu$ is a maximal measure, then $d_\mu(x) = M(X)$ for all $x \in X$.
\item If $\mu \in F_1$ and there is a constant $C$ such that $d_\mu(x) = C$ for all $x \in X$, then $\mu$ is maximal and so $M(X) = C$.
\end{enumerate}
\end{thm}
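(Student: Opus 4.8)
The plan is to work on the hyperplane $F_1 = \{v \in \Rb^m \st \ip<v,\1>=1\}$ and exploit the $1$-negative type hypothesis, which says precisely that the quadratic form $v \mapsto \ip<Dv,v>$ is negative semidefinite on the subspace $\1^\perp = \{w \st \ip<w,\1>=0\}$. First I would prove (1): fix any $\mu_0 \in F_1$ (for instance $\mu_0 = \frac1m\1$) and write a general element of $F_1$ as $\mu_0 + w$ with $w \in \1^\perp$. Then $I(\mu_0+w) = \ip<D\mu_0,\mu_0> + 2\ip<D\mu_0,w> + \ip<Dw,w>$, and since the last term is $\le 0$ by $1$-negative type, $I$ is a concave quadratic on the affine space $F_1$. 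A concave quadratic on $\Rb^{m-1}$ that is bounded above (which holds here since $M(X) \le \frac{m}{4}\diam(X) < \infty$ — indeed for finite $X$ one only needs $M(X)<\infty$, but boundedness is automatic) attains its supremum; concretely, restrict to a compact ball large enough that $I$ is smaller outside it than at $\mu_0$, and use continuity. This gives a maximal measure $\mu$.

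Next, (2): suppose $\mu$ is maximal. For any $x_k \in X$, the signed measure $\delta_{x_k} - \mu$ lies in $\1^\perp$, so $\mu + t(\delta_{x_k}-\mu) \in F_1$ for all $t \in \Rb$, and the function $t \mapsto I(\mu + t(\delta_{x_k}-\mu))$ is a concave quadratic with a maximum at $t=0$; hence its derivative at $t=0$ vanishes. Computing, $\frac{d}{dt}\big|_{t=0} I(\mu+t(\delta_{x_k}-\mu)) = 2\ip<D\mu,\delta_{x_k}-\mu> = 2\bigl(d_\mu(x_k) - I(\mu)\bigr)$. Setting this to zero gives $d_\mu(x_k) = I(\mu) = M(X)$ for every $k$, which is (2). (One could also phrase this via a Lagrange multiplier condition $D\mu = \lambda\1$ on $F_1$ and then dot with $\mu$ to identify $\lambda = M(X)$, but the one-variable restriction argument is cleanest and avoids invertibility issues.)

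Finally, (3): suppose $\mu \in F_1$ with $d_\mu \equiv C$ on $X$. For any other $\nu \in F_1$, write $w = \nu - \mu \in \1^\perp$ and expand
\[
 I(\nu) = I(\mu) + 2\ip<D\mu,w> + \ip<Dw,w>.
\]
Now $\ip<D\mu,w> = \sum_k w_k\, d_\mu(x_k) = C\sum_k w_k = C\ip<w,\1> = 0$, and $\ip<Dw,w> \le 0$ by $1$-negative type, so $I(\nu) \le I(\mu)$. Hence $\mu$ is maximal and $M(X) = I(\mu) = \sum_k \mu_k d_\mu(x_k) = C\sum_k \mu_k = C$, giving (3).

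I expect no serious obstacle: the whole argument is the standard fact that maximizing a concave quadratic subject to one linear constraint is governed by a first-order stationarity condition, and the three parts are just three readings of that fact. The only point requiring a little care is justifying existence of the maximizer in (1) — i.e., turning "concave and bounded above" into "attains its max" — but for a continuous concave function on a finite-dimensional affine space this is routine, and here it is made even easier by the explicit a priori bound $M(X) \le \frac{m}{4}\diam(X)$ quoted above, together with the observation that the (negative semidefinite) quadratic form forces $I$ to decay on any direction along which it is strictly negative definite, while along its kernel $I$ is affine and hence, being bounded above, constant.
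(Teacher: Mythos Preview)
Your argument is correct. Parts (2) and (3) are clean and complete; in (1) the only point needing a word of justification is the passage from ``concave and bounded above'' to ``attains its maximum'', but your final sentence handles this correctly: decomposing $\1^\perp$ into the radical of the form $w\mapsto\ip<Dw,w>$ and its orthogonal complement, boundedness forces the linear part to vanish on the radical, after which the problem reduces to a strictly concave quadratic on a finite-dimensional space. One small caveat: the bound $M(X)\le\frac{m}{4}\diam(X)$ you invoke is stated in the paper only for $\ell^1$-subsets of $\Rb^n$, not for arbitrary finite spaces of $1$-negative type, so you should rely solely on the hypothesis $M(X)<\infty$ (as you in fact note parenthetically).

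By comparison, the paper does not prove this theorem at all: its ``proof'' consists entirely of citations to \cite{NWQM2}. Your approach is therefore genuinely different in that it is self-contained, exploiting directly the concavity of $\mu\mapsto\ip<D\mu,\mu>$ on $F_1$ that the $1$-negative type hypothesis provides. What your route buys is independence from the cited literature and transparency about why the three statements hold; what the paper's route buys is brevity and the observation that these facts are already established in greater generality (for compact quasihypermetric spaces) elsewhere.
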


\begin{proof} (1) is \cite[Theorem~4.11]{NWQM2}; (2) and (3) are from \cite[Theorem~3.1]{NWQM2}.
\end{proof}

Theorem~\ref{max-measures} is closely related to the following result.

\begin{thm}\label{M-b}
Suppose that $(X,d)$ is a finite metric space of $1$-negative type with distance matrix $D$. Then there exists $b \in \Rb^m$ such that $Db = \1$ and $\ip<b,\1> \ge 0$. Further
\begin{enumerate}
\item The value of $\ip<b,\1>$ is independent of $b$. That is, if $Db = Db' = \1$ then $\ip<b,\1> = \ip<b',\1>$.
\item $M(X) < \infty$ if and only if $\ip<b,\1> > 0$. In this case $\mu = \frac{1}{\ip<b,\1>} b$ is a maximal measure and
    \[ M(X) = \frac{1}{\ip<b,\1>}. \]
\end{enumerate}
\end{thm}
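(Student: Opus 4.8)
The plan is to exploit the $1$-negative type hypothesis together with Theorem~\ref{max-measures}. First I would establish the existence of a solution $b$ to $Db = \1$. If $D$ is invertible this is immediate with $b = D^{-1}\1$; in general one has to argue that $\1$ lies in the range of $D$. For this I would use the known structure of $D$ for $1$-negative type spaces: there is an associated positive semidefinite matrix (coming from the S-embedding, or equivalently from the fact that $-D$ is conditionally positive semidefinite), and one checks that $\1 \perp \ker D$, which forces $\1 \in \mathrm{ran}(D)$ since $D$ is symmetric. Actually a cleaner route is to invoke Theorem~\ref{max-measures}(1): a maximal measure $\mu \in F_1$ exists, and by (2) we have $d_\mu(x) = M(X)$ for all $x$, i.e. $D\mu = M(X)\1$. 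If $M(X) > 0$ (which we always have, as noted in the text, $M(X)$ being strictly positive whenever it relates to an honest metric — but here $M(X)$ could be $+\infty$, so this needs care). So I would split into the two cases $M(X) < \infty$ and $M(X) = \infty$ and handle them separately, using a limiting/compactness argument in the infinite case to still produce $b$ with $Db = \1$.

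The core claim to prove is part (1): the value $\ip<b,\1>$ is independent of the choice of $b$. This is where the $1$-negative type hypothesis does real work. Suppose $Db = Db' = \1$. Then $v := b - b'$ satisfies $Dv = 0$. I want $\ip<b - b', \1> = \ip<v, Db> = \ip<Dv, b> = 0$ using symmetry of $D$. Wait — that already does it: $\ip<v,\1> = \ip<v, Db> = \ip<Dv, b> = \ip<0,b> = 0$. So independence of $\ip<b,\1>$ is actually just symmetry of $D$ and does not need negative type at all. The negative type enters instead in showing $\ip<b,\1> \ge 0$: writing $\ip<b,\1> = \ip<b, Db> = \ip<Db,b>$, I would want this to be $\ge 0$. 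For a general symmetric $D$ this is false, but here, if $b \in F_1$ after normalization, $\ip<Db,b> = I(\mu)$ for the corresponding signed measure, and positivity of $I$ on measures of total mass... hmm, but negative type says $I$ is $\le 0$ on mass-zero measures, so on mass-one measures $I(\mu) = I(\mu_0 + \frac{1}{m}\1)$ for a mass-zero $\mu_0$ — expanding, $I(\mu) \ge$ the cross terms, and one shows the sup over $F_1$ is attained at an interior-type critical point giving $\ip<Db,b> = \ip<b,\1> \ge 0$. The clean statement: for $1$-negative type spaces, $I$ restricted to $F_1$ is concave, so any critical point (and $b/\ip<b,\1>$ is one when $\ip<b,\1> \neq 0$) is a maximizer, and nonnegativity of $\ip<b,\1>$ follows from the fact that $M(X) > 0$ forces the maximizing measure to have $I(\mu) > 0$, hence $\ip<b,\1> = 1/M(X) > 0$ when finite, and $= 0$ corresponds to $M(X) = \infty$.

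For part (2), the plan is a direct translation. If $\ip<b,\1> > 0$, set $\mu = b/\ip<b,\1>$. Then $\mu \in F_1$ and $D\mu = \1/\ip<b,\1>$, so $d_\mu(x) = \frac{1}{\ip<b,\1>}$ is constant on $X$; by Theorem~\ref{max-measures}(3), $\mu$ is maximal and $M(X) = \frac{1}{\ip<b,\1>} < \infty$. Conversely, if $M(X) < \infty$, Theorem~\ref{max-measures}(1)--(2) gives a maximal $\mu \in F_1$ with $D\mu = M(X)\1$; since $M(X) > 0$ (noted in the text: $M(X) \ge \frac{m-1}{m} d_0 > 0$), $b := \mu/M(X)$ satisfies $Db = \1$ with $\ip<b,\1> = 1/M(X) > 0$, and by part~(1) every solution has this same value. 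This also dispatches the existence-of-$b$ issue in the finite case; the only genuinely delicate point is producing a solution $b$ to $Db = \1$ when $M(X) = \infty$, and proving $\ip<b,\1> = 0$ there.

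The main obstacle I anticipate is the $M(X) = \infty$ case: showing that $Db = \1$ is still solvable (i.e. $\1 \in \mathrm{ran}\,D$ even when no maximal measure exists) and that necessarily $\ip<b,\1> = 0$. I would handle solvability via the range/kernel orthogonality for symmetric $D$ — it suffices that $\ip<u,\1> = 0$ for every $u$ with $Du = 0$, which follows because such a $u$ added to any $\mu \in F_1$ keeps $I$ unchanged to first order, and $1$-negative type then forces (via the concavity/second-order argument) that $\ip<u,\1> = 0$ unless $I$ is unbounded along that direction, matching exactly the dichotomy in the theorem. Getting this bookkeeping clean — rather than the individual computations, which are routine — is the real content.
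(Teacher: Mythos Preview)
The paper does not actually prove this result; it cites \cite{WEst} for existence, for (1), and for (2). So there is no ``paper's approach'' to compare your argument against --- you are supplying a self-contained proof where the paper defers to the literature. Most of your plan is sound. Your observation for (1) is correct and uses only the symmetry of $D$: if $Dv=0$ then $\ip<v,\1>=\ip<v,Db>=\ip<Dv,b>=0$. Your treatment of (2) via Theorem~\ref{max-measures}(3) in one direction and Theorem~\ref{max-measures}(1)--(2) in the other is also correct, as is the concavity reasoning for $\ip<b,\1>\ge 0$: if $\ip<b,\1>\ne 0$ then $\mu=b/\ip<b,\1>$ is a critical point of the concave functional $I$ on $F_1$, hence a maximizer, and $M(X)=I(\mu)=1/\ip<b,\1>$ combined with $M(X)>0$ forces $\ip<b,\1>>0$.

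The one genuine gap is the existence of $b$ in the case $M(X)=\infty$. You correctly reduce to showing $\ip<u,\1>=0$ for every $u\in\ker D$, but the dichotomy you invoke --- ``either $\ip<u,\1>=0$, or $I$ is unbounded along that direction'' --- is circular here: when $M(X)=\infty$ you would land in the second branch, and that tells you nothing about whether $\1$ lies in the range of $D$. The fix is that the second branch is in fact impossible. If $Du=0$ with $c:=\ip<u,\1>\ne 0$, set $\mu_0=u/c\in F_1$. Every $\nu\in F_1$ can be written $\nu=\mu_0+w$ with $w\in F_0$, and since $D\mu_0=0$ one computes $I(\nu)=\ip<Dw,w>\le 0$ by $1$-negative type. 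This contradicts $I\bigl(\tfrac{1}{m}\1\bigr)>0$ (noted in the text). Hence $\ker D\subseteq F_0$ unconditionally, so $\1\in(\ker D)^\perp$, which equals the range of $D$ by symmetry, and $b$ always exists. With this correction your outline goes through cleanly.
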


\begin{proof} The existence of $b$ is shown in \cite[Theorem 4.2]{WEst}. The independence of the value of $\ip<b,\1>$ was noted in \cite[Remark 4.4]{WEst}. Statement (2) is \cite[Theorem 4.8]{WEst}.
\end{proof}

\begin{thm}\label{M-Dinv}
Suppose that $(X,d)$ is a finite metric space of strict $1$-negative type with distance matrix $D$. Then $M(X) < \infty$ and
  \[ M(X) = \frac{1}{\ip<D^{-1}\1,\1>}. \]
\end{thm}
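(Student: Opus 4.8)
The plan is to deduce the result directly from Theorem~\ref{M-b} together with the characterisation of strict $1$-negative type recalled in Section~\ref{S:Neg-type}, so that essentially no new computation is needed. First I would note that a space of strict $1$-negative type is in particular of $1$-negative type, so Theorem~\ref{M-b} applies and produces a vector $b \in \Rb^m$ with $Db = \1$ and $\ip<b,\1> \ge 0$.

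Next I would invoke the Wolf--S\'anchez result quoted just before Theorem~\ref{HnStrict}: since $(X,d)$ is of strict $1$-negative type, $D$ is non-singular and $\ip<D^{-1}\1,\1> \ne 0$. Consequently $b = D^{-1}\1$ is the \emph{unique} solution of $Db = \1$, and $\ip<b,\1> = \ip<D^{-1}\1,\1>$. Pairing this with the sign information $\ip<b,\1> \ge 0$ from the previous step forces $\ip<b,\1> > 0$. Then Theorem~\ref{M-b}(2) is exactly what is needed to conclude both that $M(X) < \infty$ and that $M(X) = 1/\ip<b,\1> = 1/\ip<D^{-1}\1,\1>$.

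I do not anticipate a genuine obstacle: the substantive work is already packaged in Theorem~\ref{M-b} and in the imported equivalence between strictness and non-singularity. The only point needing a little care is upgrading $\ip<D^{-1}\1,\1> \ne 0$ to $\ip<D^{-1}\1,\1> > 0$, and as indicated this is immediate once combined with $\ip<b,\1> \ge 0$. If one preferred a more self-contained route, one could instead argue finiteness of $M(X)$ by hand --- writing $F_1 = \mu_0 + \{\xi \st \ip<\xi,\1> = 0\}$ and observing that strict $1$-negative type makes $\xi \mapsto \ip<D\xi,\xi>$ negative definite on the translating subspace, so that $I(\mu) = \ip<D\mu,\mu>$ is bounded above on $F_1$ --- and then take $\mu = D^{-1}\1/\ip<D^{-1}\1,\1> \in F_1$, for which $d_\mu(x) = 1/\ip<D^{-1}\1,\1>$ for every $x$, so that Theorem~\ref{max-measures}(3) identifies $M(X)$ with this constant.
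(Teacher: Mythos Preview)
Your proposal is correct and follows essentially the same route as the paper: establish that $D$ is invertible, set $b = D^{-1}\1$, verify $\ip<b,\1> > 0$, and invoke Theorem~\ref{M-b}(2). The only cosmetic difference is that the paper obtains $\ip<b,\1> > 0$ by a direct citation of \cite[Theorem~4.3]{WEst}, whereas you obtain it by combining the Wolf--S\'anchez nonvanishing with the nonnegativity from Theorem~\ref{M-b}; your version is arguably cleaner, since it stays within results already quoted in the paper and avoids the paper's appeal to Theorem~\ref{HnStrict} (which, strictly speaking, is stated only for subsets of $H_n$).
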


\begin{proof} By Theorem~\ref{HnStrict}, $D$ must be invertible. Let $b = D^{-1}\1$. By \cite[Theorem~4.3]{WEst}, $\ip<b,\1> > 0$ so by Theorem~\ref{M-b}, $M(X) < \infty$ and $M(X) = \bigl(\ip<D^{-1}\1,\1>\bigr)^{-1}$.
\end{proof}

\begin{thm}\label{MHn}
$M(H_n) = \frac{n}{2}$.
\end{thm}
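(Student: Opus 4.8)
The plan is to compute $M(H_n)$ by exhibiting an explicit measure $\mu \in F_1$ for which $d_\mu$ is constant on $H_n$, and then invoking Theorem~\ref{max-measures}(3). The natural candidate is the uniform measure $\mu_0 = \frac{1}{2^n}\1$, which assigns mass $2^{-n}$ to each of the $2^n$ vertices of $H_n$. By symmetry (the isometry group of $H_n$ acts transitively on the vertices), $d_{\mu_0}(x)$ is automatically independent of $x \in H_n$, so the only task is to evaluate the constant.

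First I would fix a vertex, say $x = \zero = (0,\dots,0)$, and compute $d_{\mu_0}(\zero) = \frac{1}{2^n}\sum_{y \in H_n} d_1(\zero,y)$. Since $d_1(\zero,y)$ is just the number of $1$'s in $y$, this sum is $\sum_{k=0}^n k \binom{n}{k} = n 2^{n-1}$, so $d_{\mu_0}(\zero) = \frac{n 2^{n-1}}{2^n} = \frac{n}{2}$. A cleaner way to see this, which avoids the binomial identity, is to write $d_1(\zero,y) = \sum_{j=1}^n y_j$ and interchange the order of summation: $\frac{1}{2^n}\sum_{y} \sum_j y_j = \sum_j \frac{1}{2^n}\sum_{y} y_j = \sum_j \frac{1}{2} = \frac{n}{2}$, since exactly half the vertices have $y_j = 1$. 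By vertex-transitivity the same value $\frac{n}{2}$ is obtained at every point of $H_n$.

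Now, $H_n$ is a finite metric space of $1$-negative type (by the remark after Theorem~\ref{HnStrict}, or \cite[Theorem~4.10]{WW}), and $M(H_n) \le \frac{2^n}{4}\diam(H_n) = \frac{2^n}{4} \cdot n < \infty$ by the Nickolas--Wolf bound quoted above. Hence Theorem~\ref{max-measures}(3) applies: since $\mu_0 \in F_1$ and $d_{\mu_0}(x) = \frac{n}{2}$ for all $x \in H_n$, the measure $\mu_0$ is maximal and $M(H_n) = \frac{n}{2}$.

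There is no real obstacle here; the proof is essentially the verification that the uniform measure has constant potential, which follows from symmetry, together with a one-line count. The only point requiring a little care is confirming the hypotheses of Theorem~\ref{max-measures} — namely that $H_n$ has $1$-negative type and that $M(H_n)$ is finite — but both are already recorded in the excerpt. (One could alternatively bypass Theorem~\ref{max-measures}(3) and note directly that $d_{\mu_0}$ being constant makes $\mu_0$ a stationary point of the quadratic form $I$ on the affine space $F_1$; since $I$ is concave on $F_1$ by $1$-negative type, such a stationary point is a global maximum.)
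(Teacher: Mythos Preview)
Your proof is correct and takes essentially the same approach as the paper: both exploit the vertex-transitivity of $H_n$ to see that $D\1$ is a constant multiple of $\1$, compute that constant via $\sum_k k\binom{n}{k} = n2^{n-1}$, and then invoke a characterization of $M(X)$. The only cosmetic difference is that the paper phrases this via Theorem~\ref{M-b} (finding $b$ with $Db=\1$ and reading off $M(X)=1/\ip<b,\1>$) whereas you go through Theorem~\ref{max-measures}(3); since the paper explicitly notes these two results are closely related, this is the same argument in slightly different dress.
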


\begin{proof}
Due to the symmetry of the Hamming cube, the sum of the distances from any given point is independent of the point. Simple analysis shows that this sum is
  \[ \beta = \sum_{k=0}^n k \binom{n}{k} = n 2^{n-1} \]
and so $D \1 = \beta \1$.
Let $b = \frac{1}{\beta} \1$, so $Db = \1$ and
  \[ \ip<b,\1> = \frac{1}{n 2^{n-1}} \ip<\1,\1> = \frac{2^n}{n 2^{n-1}} = \frac{2}{n}. \]
By Theorem~\ref{M-b} then
  \[ M(H_n) = \frac{n}{2}. \]
\end{proof}

Combining the above results gives a positive answer to the conjecture in \cite{DRSW}.

\begin{thm}\label{conj-ans}
Let $X$ be a subset of $H_n$. Then
\begin{enumerate}
    \item $M(X) \le \frac{n}{2}$.
    \item If $X$ is affinely independent, then $\ip<D^{-1} \1,\1> = \frac{1}{M(X)}$ and hence $\ip<D^{-1} \1,\1> \ge \frac{2}{n}$.
\end{enumerate}
\end{thm}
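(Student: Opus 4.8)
The plan is to obtain both assertions by combining the structural facts assembled above, with essentially no further computation.

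First I would prove (1). Since $X$ is a metric subspace of $H_n$, the monotonicity observation from the start of Section~\ref{S:M-const} gives $M(X) \le M(H_n)$, and Theorem~\ref{MHn} evaluates the right-hand side as $\frac{n}{2}$. In particular $M(H_n) < \infty$ (this is part of Theorem~\ref{MHn}, and also follows from the Nickolas--Wolf bound $M(X) \le \frac{m}{4}\diam(X)$ recorded in Section~\ref{S:M-const}), so $M(X) < \infty$ as well, and (1) follows at once.

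Next I would prove (2). When $X$ is affinely independent, Theorem~\ref{HnStrict} tells us simultaneously that $X$ is of strict $1$-negative type and that $D$ is non-singular, so that $\ip<D^{-1}\1,\1>$ is well-defined. Theorem~\ref{M-Dinv} then applies directly and yields $M(X) = \bigl(\ip<D^{-1}\1,\1>\bigr)^{-1}$. Since $M(X)$ is always strictly positive (as noted in Section~\ref{S:M-const}) and, by part (1), at most $\frac{n}{2}$, I can invert this identity to conclude $\ip<D^{-1}\1,\1> = \frac{1}{M(X)} \ge \frac{2}{n}$.

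The point to keep in mind is that all of the real work has been exported to the cited results — chiefly to Theorem~\ref{M-Dinv} (which rests on Wolf's work in \cite{WEst}), Theorem~\ref{HnStrict}, and the evaluation $M(H_n) = \frac{n}{2}$ in Theorem~\ref{MHn}. Granting those, there is no genuine obstacle; the only routine checks are that the quantity being inverted is known to be finite and strictly positive, which is precisely what Section~\ref{S:M-const} supplies, and that $X$ has at least two points, which is our standing assumption and is exactly what makes $\ip<D^{-1}\1,\1>$ and $M(X)$ meaningful.
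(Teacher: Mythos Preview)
Your proof is correct and follows exactly the same route as the paper: monotonicity of $M$ together with Theorem~\ref{MHn} for part (1), and Theorems~\ref{HnStrict} and~\ref{M-Dinv} combined with part (1) for part (2). The only difference is that you spell out the finiteness and positivity of $M(X)$ needed to invert, which the paper leaves implicit.
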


\begin{proof}
(1) As noted earlier if $X \subseteq H_n$ then $M(X) \le M(H_n)$, so the result follows immediately from Theorem~\ref{MHn}.

(2) This follows immediately from  Theorem~\ref{HnStrict},  Theorem~\ref{M-Dinv} and (1).
\end{proof}

The remainder of the paper is devoted to investigating the geometric interpretation of $M(X)$ in the context of subsets of the Hamming cube.

\section{S-embeddings and spheres}\label{S-S-embed}

There is a close connection between S-embeddings onto spheres and maximal measures. We begin with two lemmas.

\begin{lem}\label{Lemma1} Suppose that $u_1,\dots,u_m \in \Rb^n$ and that $\alpha_1,\dots, \alpha_m \in \Rb$ satisfy $\sum_{i=1}^m \alpha_i = 1$. Then, for all $u \in \Rb^n$,
  \[ \sum_{i=1}^m \alpha_i \norm{u_i-u}_2^2
      = \mnorm{\sum_{i=1}^m \alpha_i u_i-u}_2^2
         + \frac{1}{2} \sum_{i,j=1}^m \alpha_i \alpha_j \norm{u_i-u_j}_2^2. \]
\end{lem}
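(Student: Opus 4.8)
The plan is to prove this identity by a direct computation, expanding the squared norms on both sides in terms of inner products and collecting terms. This is the standard ``variance decomposition'' or parallel-axis identity, and the cleanest route is to reduce everything to the case $u = 0$ and then expand.

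First I would observe that replacing each $u_i$ by $u_i - u$ (and leaving $\alpha_i$ alone) transforms the desired identity into the special case $u = 0$, because $\sum_i \alpha_i u_i$ becomes $\sum_i \alpha_i u_i - u$ (using $\sum_i \alpha_i = 1$) and the pairwise differences $u_i - u_j$ are unchanged. So it suffices to prove
\[
  \sum_{i=1}^m \alpha_i \norm{u_i}_2^2 = \mnorm{\sum_{i=1}^m \alpha_i u_i}_2^2 + \frac{1}{2}\sum_{i,j=1}^m \alpha_i \alpha_j \norm{u_i - u_j}_2^2.
\]
Next I would expand the last term using $\norm{u_i - u_j}_2^2 = \norm{u_i}_2^2 - 2\ip<u_i,u_j> + \norm{u_j}_2^2$. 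Summing against $\alpha_i \alpha_j$ and using $\sum_j \alpha_j = \sum_i \alpha_i = 1$, the double sum collapses: the $\norm{u_i}_2^2$ and $\norm{u_j}_2^2$ pieces each contribute $\sum_i \alpha_i \norm{u_i}_2^2$, while the cross term contributes $-2\mnorm{\sum_i \alpha_i u_i}_2^2$. Hence $\frac{1}{2}\sum_{i,j}\alpha_i\alpha_j\norm{u_i-u_j}_2^2 = \sum_i \alpha_i\norm{u_i}_2^2 - \mnorm{\sum_i \alpha_i u_i}_2^2$, which rearranges to exactly the $u=0$ identity above.

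Since the computation is entirely elementary, there is no real obstacle; the only thing to be careful about is bookkeeping with the coefficient $\sum_i \alpha_i = 1$, which is used twice (once in the reduction to $u=0$ and once in collapsing the double sum), and the factor of $\frac{1}{2}$ that compensates for counting each unordered pair twice. Alternatively one could skip the reduction step and expand both sides of the original identity directly in $\ip<\cdot,\cdot>$; the bilinearity makes this equally routine, but the reduction to $u=0$ keeps the algebra shortest.
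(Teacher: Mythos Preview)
Your proof is correct and follows essentially the same approach as the paper: both expand the squared norms into inner products and collect terms using $\sum_i \alpha_i = 1$. Your preliminary reduction to the case $u=0$ by translation is a tidy shortcut that the paper omits (it carries $u$ through the whole calculation), but the underlying computation is the same.
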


\begin{proof} With the notation of the lemma
  \begin{align*}
  \sum_{i=1}^m \alpha_i \norm{u_i-u}_2^2
   &= \sum_{i=1}^m \alpha_i \norm{u_i}_2^2 + \sum_{i=1}^m \alpha_i \norm{u}_2^2
               - 2 \sum_{i=1}^m \alpha_i \ip<u_i,u> \\
   &= \norm{u}_2^2 - 2  \ip<\sum_{i=1}^m \alpha_i  u_i,u>
        + \mnorm{\sum_{i=1}^m \alpha_i u_i}_2^2 - \mnorm{\sum_{i=1}^m \alpha_i u_i}_2^2
          + \sum_{i=1}^m \alpha_i \norm{u_i}_2^2 \\
   &= \mnorm{\sum_{i=1}^m \alpha_i u_i - u}_2^2 - \mnorm{\sum_{i=1}^m \alpha_i u_i}_2^2
          + \sum_{i=1}^m \alpha_i \norm{u_i}_2^2 \\
   &= \mnorm{\sum_{i=1}^m \alpha_i u_i - u}_2^2 - \sum_{i,j=1}^m \alpha_i \alpha_j \ip<u_i,u_j>
          + \sum_{i=1}^m \alpha_i \norm{u_i}_2^2 \\
   &= \mnorm{\sum_{i=1}^m \alpha_i u_i - u}_2^2 + \sum_{i=1}^m \alpha_i \norm{u_i}_2^2
       - \frac{1}{2} \sum_{i,j=1}^m \alpha_i \alpha_j
             \left(\norm{u_i}_2^2 + \norm{u_j}_2^2 - \norm{u_i-u_j}_2^2 \right) \\
   &= \mnorm{\sum_{i=1}^m \alpha_i u_i - u}_2^2 + \sum_{i=1}^m \alpha_i \norm{u_i}_2^2 \\
   & \qquad - \frac{1}{2} \sum_{j=1}^m \alpha_j \sum_{i=1}^m \alpha_i \norm{u_i}_2^2
           - \frac{1}{2} \sum_{i=1}^m \alpha_i \sum_{j=1}^m \alpha_j \norm{u_j}_2^2
           + \frac{1}{2} \sum_{i,j=1}^m \alpha_i \alpha_j \norm{u_i-u_j}_2^2 \\
   &= \mnorm{\sum_{i=1}^m \alpha_i u_i-u}_2^2
         + \frac{1}{2} \sum_{i,j=1}^m \alpha_i \alpha_j \norm{u_i-u_j}_2^2.
  \end{align*}
\end{proof}

\begin{lem}\label{Lemma2}
Let $(X,d)$, $X = \{x_1,\dots,x_m\}$ be a finite metric space of $1$-negative type, and let $\iota: (X,d^{1/2}) \to (\Rb^n,\norm{\cdot}_2)$ be an S-embedding of $X$. Suppose that $\mu = (\alpha_1,\dots,\alpha_m) \in F_1$. Then, for all $x \in X$,
  \[ d_\mu(x) = \mnorm{\sum_{i=1}^m \alpha_i \iota(x_i)-\iota(x)}_2^2
                   + \frac{I(\mu)}{2}. \]
\end{lem}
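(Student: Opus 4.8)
The plan is to derive this as an immediate specialization of Lemma~\ref{Lemma1}, using the defining property of an S-embedding. First I would record what an S-embedding gives us: since $\iota$ maps $(X,d^{1/2})$ isometrically into $(\Rb^n,\norm{\cdot}_2)$, for all $i,j$ we have $\norm{\iota(x_i)-\iota(x_j)}_2 = d(x_i,x_j)^{1/2}$, and hence
\[ \norm{\iota(x_i)-\iota(x_j)}_2^2 = d(x_i,x_j). \]
Since $X$ is finite, the point $x$ in the statement equals $x_k$ for some $k$, so the same identity applies with $x$ in place of $x_j$, giving $\norm{\iota(x_i)-\iota(x)}_2^2 = d(x_i,x)$.

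Next I would apply Lemma~\ref{Lemma1} with the substitution $u_i = \iota(x_i)$ for $i=1,\dots,m$, $u = \iota(x)$, and the same scalars $\alpha_1,\dots,\alpha_m$ (whose sum is $1$ because $\mu \in F_1$). This yields
\[ \sum_{i=1}^m \alpha_i \norm{\iota(x_i)-\iota(x)}_2^2
   = \mnorm{\sum_{i=1}^m \alpha_i \iota(x_i)-\iota(x)}_2^2
     + \frac{1}{2}\sum_{i,j=1}^m \alpha_i \alpha_j \norm{\iota(x_i)-\iota(x_j)}_2^2. \]
Now I would rewrite the two sums over distance-squares using the S-embedding identity: the left-hand side becomes $\sum_{i=1}^m \alpha_i d(x_i,x) = d_\mu(x)$ by the definition of $d_\mu$, and the final sum becomes $\frac{1}{2}\sum_{i,j=1}^m \alpha_i\alpha_j d(x_i,x_j) = \frac{1}{2} I(\mu)$ by the definition of $I(\mu)$. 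Substituting these in gives exactly the claimed formula.

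There is no genuine obstacle here; the lemma is a bookkeeping consequence of Lemma~\ref{Lemma1}. The only point requiring a word of care is the observation that $x \in X$ lets us treat $\iota(x)$ on the same footing as the $\iota(x_i)$ when invoking the squared-distance identity; the hypothesis that $(X,d)$ is of $1$-negative type is not used directly in the computation — it is only what guarantees (via Schoenberg) that an S-embedding exists in the first place.
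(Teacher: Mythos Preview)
Your proposal is correct and matches the paper's own proof essentially line for line: both apply Lemma~\ref{Lemma1} with $u_i=\iota(x_i)$, $u=\iota(x)$, and then use the S-embedding identity $\norm{\iota(x_i)-\iota(x_j)}_2^2=d(x_i,x_j)$ to rewrite the left-hand side as $d_\mu(x)$ and the double sum as $I(\mu)$. Your closing remark that the $1$-negative type hypothesis only serves to ensure an S-embedding exists is accurate and worth noting.
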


\begin{proof} Using Lemma~\ref{Lemma1}
\begin{align*}
  d_\mu(x) & = \sum_{i=1}^m \alpha_i \norm{\iota(x_i) - \iota(x)}_2^2 \\
     &= \mnorm{\sum_{i=1}^m \alpha_i \iota(x_i)-\iota(x)}_2^2
         + \frac{1}{2} \sum_{i,j=1}^m \alpha_i \alpha_j \norm{\iota(x_i)-\iota(x_j)}_2^2 \\
     &= \mnorm{\sum_{i=1}^m \alpha_i \iota(x_i)-\iota(x)}_2^2
         + \frac{1}{2} \sum_{i,j=1}^m \alpha_i \alpha_j d(x_i,x_j) \\
     &= \mnorm{\sum_{i=1}^m \alpha_i \iota(x_i)-\iota(x)}_2^2
         + \frac{I(\mu)}{2}.
\end{align*}

\end{proof}

%\section{The main theorem}

 The content of the following result can be found in Theorem~3.2 of \cite{NWQM3}. We include a short proof for completeness.

\begin{thm}\label{Theorem1} Let $(X,d)$, $X = \{x_1,\dots,x_m\}$ be a finite metric space of $1$-negative type, and let $\iota: (X,d^{1/2}) \to (\Rb^n,\norm{\cdot}_2)$ be an S-embedding of $X$. Then
\begin{enumerate}
  \item If $\mu = (\alpha_1,\dots,\alpha_m)$ is a maximal measure on $X$, then $\iota(X)$ lies on a sphere in $\Rb^n$ with centre $\sum_{i=1}^m \alpha_i \iota(x_i)$ and radius
        \[ r = \sqrt{\frac{M(X)}{2}}. \]
  \item Suppose that $\iota(X)$ lies on a sphere of radius $r$ with centre $c$ which lies inside the affine hull of $\{\iota(x_1),\dots,\iota(x_m)\}$, say $c = \sum_{i=1}^m \beta_i \iota(x_i)$ with $\sum_{i=1}^m \beta_i = 1$. Then $\mu = (\beta_1,\dots,\beta_m)$ is a maximal measure on $X$ and
        \[ r = \sqrt{\frac{M(X)}{2}}. \]
\end{enumerate}
\end{thm}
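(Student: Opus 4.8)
The plan is to route everything through Lemma~\ref{Lemma2}, which converts the constancy of $d_\mu$ on $X$ into the statement that the points $\iota(x_i)$ are equidistant from the affine combination $\sum_i \alpha_i \iota(x_i)$.

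For part (1), suppose $\mu = (\alpha_1,\dots,\alpha_m)$ is maximal. Then $I(\mu) = M(X)$ by definition, and by Theorem~\ref{max-measures}(2) we have $d_\mu(x) = M(X)$ for every $x \in X$. Substituting both facts into Lemma~\ref{Lemma2} gives, for each $x \in X$,
\[ M(X) = \mnorm{\sum_{i=1}^m \alpha_i \iota(x_i) - \iota(x)}_2^2 + \frac{M(X)}{2}, \]
so $\mnorm{c - \iota(x)}_2^2 = M(X)/2$ with $c = \sum_{i=1}^m \alpha_i \iota(x_i)$. Since $x \in X$ was arbitrary, $\iota(X)$ lies on the sphere of centre $c$ and radius $\sqrt{M(X)/2}$, as claimed.

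For part (2), write $c = \sum_{i=1}^m \beta_i \iota(x_i)$ with $\sum_{i=1}^m \beta_i = 1$ and set $\mu = (\beta_1,\dots,\beta_m) \in F_1$; the hypothesis says $\mnorm{\iota(x) - c}_2^2 = r^2$ for all $x \in X$. Lemma~\ref{Lemma2} then yields $d_\mu(x) = r^2 + I(\mu)/2$ for every $x \in X$, so $d_\mu$ is constant on $X$. Averaging this identity against $\mu$ itself shows $I(\mu) = \sum_{i=1}^m \beta_i d_\mu(x_i) = r^2 + I(\mu)/2$, hence $I(\mu) = 2r^2$, which in particular is finite. One may then invoke Theorem~\ref{max-measures}(3) (or argue directly: for any $\nu \in F_1$, expanding $\ip<D(\nu-\mu),\nu-\mu> \le 0$ and using $d_\mu \equiv 2r^2$ together with $\ip<\nu,\1> = 1$ gives $I(\nu) \le 2r^2 = I(\mu)$) to conclude that $\mu$ is maximal and $M(X) = 2r^2$. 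Rearranging gives $r = \sqrt{M(X)/2}$.

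I do not expect a serious obstacle here: the two lemmas carry the argument, and the only point needing care in part (2) is confirming $M(X) < \infty$ before appealing to Theorem~\ref{max-measures}(3) — which is why I would first extract $I(\mu) = 2r^2$ from the averaging step, or else bypass that theorem entirely with the one-line negative-type computation indicated above. Beyond that, the bookkeeping is just keeping straight the two roles played by $M(X)$: once as $I(\mu)$ and once as the common value of $d_\mu$ on $X$.
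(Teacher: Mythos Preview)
Your proof is correct and follows essentially the same route as the paper's, via Lemma~\ref{Lemma2} and Theorem~\ref{max-measures}. Your handling of part~(2) is in fact slightly more careful: you explicitly address the hypothesis $M(X) < \infty$ of Theorem~\ref{max-measures}(3) (via the averaging step or the direct negative-type inequality), whereas the paper invokes that result without commenting on this point.
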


\begin{proof}
(1) Let $\mu = (\alpha_1,\dots,\alpha_m) \in F_1$ be a maximal measure on $X$.  Fix then $x \in X$.
 By Theorem \ref{max-measures}  and Lemma \ref{Lemma2}
  \[ M(X) = d_\mu(x) = \mnorm{\sum_{i=1}^m \alpha_i \iota(x_i) - \iota(x)}_2^2
            + \frac{I(\mu)}{2}. \]
Since $\mu$ is maximal $I(\mu) = M(X)$ and hence
 \[ \mnorm{\sum_{i=1}^m \alpha_i \iota(x_i) - \iota(x)}_2^2
          = \frac{M(X)}{2} \]
which proves (1).

(2) Let $c$ be as in the statement of the theorem and suppose that $x \in X$, so that $\norm{c - \iota(x)}_2^2 = r^2$. Let $\mu = (\beta_1,\dots,\beta_m)$. By Lemma~\ref{Lemma2}
  \[ d_\mu(x) = \norm{c - \iota(x)}_2^2 + \frac{I(\mu)}{2} = r^2 + \frac{I(\mu)}{2}. \]
Since $d_\mu(x)$ is independent of $x$, from Theorem~\ref{max-measures} we can conclude that $\mu$ is maximal on $X$ and that
  \[ M(X) = r^2 + \frac{I(\mu)}{2}  \]
and hence that
  \[ r^2 = \frac{M(X)}{2}. \]
\end{proof}

Suppose that $B = \{v_1,\dots,v_k\}$ is a basis for a subspace $Z \subseteq \Rb^n$.
Then there is a unique point $c \in Z$ which is equidistant from all the elements of $B$ and the origin.
Indeed a small calculation shows that if $A$ is the $n \times k$ matrix whose $i$th column is $v_i$, then %the centre of the sphere is at
  \begin{equation}\label{basis-sphere}
c = \sum_{i=1}^k \gamma_i v_i
  \qquad \text{where} \qquad
  \begin{pmatrix} \gamma_1 \\ \vdots \\ \gamma_k \end{pmatrix}
   = \frac{1}{2} (A^TA)^{-1}
           \begin{pmatrix} \norm{v_1}_2^2 \\ \vdots \\ \norm{v_k}_2^2 \end{pmatrix}.
   \end{equation}
This implies the following.

\begin{lem}\label{unique-S} Suppose that $X = \{x_1,\dots,x_m\}$ is a finite subset of $\Rb^n$ and let $Z_X$ be the smallest affine subspace of $\Rb^n$ containing $X$. Then there is at most one sphere in $\Rb^n$ whose centre lies in $Z_X$ and which contains the points of $X$.
\end{lem}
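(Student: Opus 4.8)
The plan is to reduce the statement to the situation described immediately above, where a subspace is presented together with a basis, so that formula~(\ref{basis-sphere}) applies. Fix a point of $X$, say $x_1$, and translate the whole configuration by $-x_1$: put $y_i = x_i - x_1$ for $i = 1,\dots,m$, so that $y_1 = \zero$. Then the smallest affine subspace containing $X$ becomes, after translation, the \emph{linear} subspace $Z = Z_X - x_1$, which is spanned by $\{y_2,\dots,y_m\}$. A sphere in $\Rb^n$ with centre $c \in Z_X$ passing through every point of $X$ corresponds bijectively, under this translation, to a sphere with centre $c' = c - x_1 \in Z$ passing through every point of $\{y_1,\dots,y_m\}$. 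Hence it is enough to show that there is at most one sphere whose centre lies in $Z$ and which contains $\{y_1,\dots,y_m\}$.

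Now choose from among $y_2,\dots,y_m$ a subset $B = \{v_1,\dots,v_k\}$ forming a basis of $Z$; this is possible since those vectors span $Z$. Suppose a sphere has centre $c' \in Z$ and passes through $y_1 = \zero$ and through each element of $B$. Then $c'$ is equidistant from the origin and from each $v_i$, and by the calculation recorded in~(\ref{basis-sphere}) there is exactly one such point of $Z$, namely $c' = \sum_{i=1}^k \gamma_i v_i$ with the $\gamma_i$ given by that formula. So the centre $c'$ of any such sphere is uniquely determined, and its radius is then forced to be $\norm{c'}_2$, the distance from $c'$ to $y_1 = \zero$. Therefore there is at most one sphere with centre in $Z$ through $\{y_1,\dots,y_m\}$; translating back, at most one sphere with centre in $Z_X$ through $X$.

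There is no substantial obstacle here. The only thing that needs a moment's care is the passage from the affine setting to the linear one covered by~(\ref{basis-sphere}) — handled by the translation by $-x_1$ — together with the observation that for the \emph{uniqueness} claim it suffices that the sphere pass through the origin and through a basis extracted from $X$; the remaining points of $X$ impose no further constraint on the centre (they may constrain \emph{existence}, but that is not asserted).
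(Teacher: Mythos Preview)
Your argument is correct and follows essentially the same route as the paper: the lemma is presented there as an immediate consequence of the uniqueness in formula~(\ref{basis-sphere}), and you have simply made explicit the translation by $-x_1$ to pass from the affine to the linear setting and the extraction of a basis from the $y_i$. Nothing further is needed.
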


%
%  Section 5
%

\section{The $M$-constant for subsets of the Hamming cube}

Suppose that $X = \{x_1,\dots,x_m\}  \subseteq H_n$.
Let $Z_X$ denote the smallest affine subspace of $\Rb^n$ containing the points $\{x_1,\dots,x_m\}$. We shall use $d_2(x,Z_X)$ to denote the Euclidean distance from a point $x \in \Rb^n$ to an affine subspace $Z_X$. Let $h = \bigl(\frac{1}{2},\dots,\frac{1}{2}\bigr) \in \Rb^n$.

\begin{thm}\label{Main}
Suppose that $X = \{x_1,\dots,x_m\} \subseteq H_n$. Then
  \begin{equation}\label{main-ident}
  M(X)  = \frac{n}{2} - 2 d_2(h,Z_X)^2.
  \end{equation}
\end{thm}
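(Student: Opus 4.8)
The plan is to exploit the observation, recorded in Section~\ref{S:Neg-type}, that the natural inclusion $\iota\colon H_n \to \Rb^n$ is an S-embedding of $(H_n,d_1^{1/2})$, and hence so is its restriction to $X$. Since $X \subseteq H_n$ is of $1$-negative type, and since $M(X) < \infty$ by the Nickolas--Wolf bound $M(X) \le \tfrac{m}{4}\diam(X)$ quoted in Section~\ref{S:M-const}, Theorem~\ref{Theorem1} applies: it suffices to exhibit a single sphere in $\Rb^n$ passing through every point of $X$ whose centre lies in the affine hull $Z_X$ of $X$. If that sphere has radius $r$, then Theorem~\ref{Theorem1}(2) forces $M(X) = 2r^2$, and the theorem will follow once we identify $r$.

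To build this sphere I would start from the fact that every vertex of the cube lies on the sphere about $h = (\tfrac12,\dots,\tfrac12)$ of radius $\tfrac{\sqrt n}{2}$: for $x \in H_n$ each coordinate of $x-h$ equals $\pm\tfrac12$, so $\norm{x-h}_2^2 = n/4$. Let $c$ be the orthogonal projection of $h$ onto the affine subspace $Z_X$, so that $c \in Z_X$, the vector $h-c$ is orthogonal to the direction space of $Z_X$, and $\norm{h-c}_2 = d_2(h,Z_X)$. For any $x \in X$ we have $x \in Z_X$, so $x-c$ lies in the direction space of $Z_X$ and is therefore orthogonal to $h-c$; the Pythagorean identity then gives
\[
 \norm{x-c}_2^2 = \norm{x-h}_2^2 - \norm{h-c}_2^2 = \frac{n}{4} - d_2(h,Z_X)^2 ,
\]
a value independent of $x$ (and strictly positive, since $X$ contains at least two distinct points). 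Writing $r^2 = \tfrac{n}{4} - d_2(h,Z_X)^2$, we conclude that $\iota(X) = X$ lies on the sphere of radius $r$ centred at $c$, and $c$ is by construction a point of $Z_X$, hence an affine combination of the elements of $X$.

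It then remains only to feed this sphere into Theorem~\ref{Theorem1}(2), which yields $r = \sqrt{M(X)/2}$, that is,
\[
 M(X) = 2r^2 = \frac{n}{2} - 2\, d_2(h,Z_X)^2 .
\]
I do not expect a genuine obstacle: the proof is essentially one application of the Pythagorean identity followed by an invocation of Theorem~\ref{Theorem1}. The only points that require attention are the verification of the hypotheses of Theorem~\ref{Theorem1}(2) --- namely that $M(X)$ is finite and that the centre $c$ really lies in the affine hull of $X$ and not merely in some ambient subspace --- and both are immediate from the construction above. (One could instead argue via Theorem~\ref{Theorem1}(1) together with the uniqueness in Lemma~\ref{unique-S}, but the direct route through part~(2) is cleaner.)
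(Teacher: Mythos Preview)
Your proposal is correct and follows essentially the same route as the paper: project $h$ orthogonally onto $Z_X$, use Pythagoras and the fact that every cube vertex is at distance $\sqrt{n}/2$ from $h$ to see that $X$ lies on a sphere in $Z_X$ of radius $r$ with $r^2 = \tfrac{n}{4} - d_2(h,Z_X)^2$, and then read off $M(X) = 2r^2$ from Theorem~\ref{Theorem1}(2). The paper's proof is almost word-for-word the same (it even notes Lemma~\ref{unique-S} parenthetically, just as you suggest the alternative route via part~(1)); your explicit checks that $M(X)<\infty$ and that $c$ genuinely lies in the affine hull are good hygiene but are handled implicitly in the paper.
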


\begin{proof}
Let $\iota: (H_n,d_1^{1/2}) \to (\Rb^n,\norm{\cdot}_2)$ be the natural inclusion map of $H_n$ in $\Rb^n$. As noted in Section~\ref{S:Neg-type}, this map is necessarily an S-embedding.

Since $\norm{x-h}_2^2 = \frac{n}{4}$ for all $x \in H_n$, we have that $X = \iota(X)$ lies on the sphere $S \subseteq \Rb^n$ of radius $r = \sqrt{n}/2$ centred at $h$. Let $P$ be the orthogonal projection from $\Rb^n$ onto $Z_X$ and let $c_X = Ph$. If $u \in \iota(X) \subseteq Z_X$ then, by Pythagoras,
  \[ \norm{c_X-u}_2^2 = \norm{h - u}_2^2 - \norm{h - c_X}_2^2
     = \frac{n}{4} - d_2(h,Z_X)^2. \]
That is, all points in $\iota(X)$ lie on a sphere $S_X$ with centre $c_X \in Z_X$ and radius $r$ with $r^2 = \frac{n}{4} - d_2(h,Z_X)^2$. (Note that by Lemma~\ref{unique-S}, there is only one such sphere.)

But by Theorem~\ref{Theorem1}(2), the radius of such this sphere must also satisfy
  \[ r^2 = \frac{n}{4} - d_2(h,Z_X)^2 = \frac{M(X)}{2}\]
which gives the result.
\end{proof}

Equation (\ref{main-ident})  immediately gives the following characterization of when the maximum value of $M(X)$ is achieved.

\begin{cor}
 $M(X)$ achieves its maximum value of $\frac{n}{2}$ if and only if $h$ lies in $Z_X$.
\end{cor}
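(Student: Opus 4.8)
The plan is to read the equivalence off directly from the identity (\ref{main-ident}) established in Theorem~\ref{Main}. That theorem gives $M(X) = \frac{n}{2} - 2\,d_2(h,Z_X)^2$ for every subset $X$ of $H_n$. Since $d_2(h,Z_X)^2 \ge 0$, this recovers the bound $M(X) \le \frac{n}{2}$ of Theorem~\ref{conj-ans}(1), and the value $\frac{n}{2}$ is genuinely attained (e.g.\ $X = H_n$, by Theorem~\ref{MHn}), so $\frac{n}{2}$ really is the maximum of $M$ over subsets of the cube. From (\ref{main-ident}) we then have $M(X) = \frac{n}{2}$ if and only if $d_2(h,Z_X) = 0$.

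The only remaining point is to pass from ``$d_2(h,Z_X)=0$'' to ``$h \in Z_X$''. This is immediate because $Z_X$ is by definition an affine subspace of $\Rb^n$, hence a closed subset, so the Euclidean distance from $h$ to $Z_X$ vanishes exactly when $h$ lies in $Z_X$. Chaining the two equivalences gives the corollary.

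There is no real obstacle here: the substantive work is already contained in Theorem~\ref{Main}, and this corollary is a one-line specialization. If one preferred a purely geometric phrasing, one could argue instead from Theorem~\ref{Theorem1}: the point $h$ is the centre of the sphere of radius $\sqrt{n}/2$ through $X$, and $M(X)$ equals its maximum $\frac{n}{2}$ precisely when this centre already lies in the affine hull $Z_X$, in which case the barycentric coefficients $(\beta_1,\dots,\beta_m)$ with $h = \sum_{i} \beta_i x_i$ form a maximal measure; but this merely unwinds the proof of Theorem~\ref{Main}.
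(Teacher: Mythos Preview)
Your argument is correct and matches the paper's approach exactly: the paper states this corollary as an immediate consequence of the identity $M(X) = \frac{n}{2} - 2\,d_2(h,Z_X)^2$ in Theorem~\ref{Main}, without writing out a separate proof. Your added remarks (that $\frac{n}{2}$ is actually attained and that $Z_X$ is closed) are fine clarifications but not substantively different from what the paper intends.
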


Combining Theorem~\ref{Main} and Theorem~\ref{conj-ans} gives Theorem~\ref{conj-sol} stated in the introduction.

Following the proof of \cite[Theorem~3.2]{NWQM3}, an alternative
but less geometrically illuminating verification of Theorem~\ref{Main} can be given by noting that for  $\mu = (\alpha_1,\dots,\alpha_m) \in F_1$, and $\{x_i\}_{i=1}^m \subseteq H_n$,
  \begin{align*}
  I(\mu)
   &=\sum_{i,j = 1}^m \alpha_i \alpha_j \norm{x_i - x_j}_1
   = \sum_{i,j = 1}^m \alpha_i \alpha_j \norm{x_i - x_j}_2^2 \\
   &= \sum_{i,j = 1}^m \alpha_i \alpha_j \norm{(x_i-h) - (x_j-h)}_2^2 \\
   &= \sum_{i,j = 1}^m \alpha_i \alpha_j \norm{(x_i-h)}_2^2
       + \sum_{i,j = 1}^m \alpha_i \alpha_j \norm{(x_j-h)}_2^2
       - 2 \sum_{i,j = 1}^m \alpha_i \alpha_j \ip<x_i-h,x_j-h> \\
   &= \frac{n}{2} - 2 \mnorm{\sum_{i = 1}^m \alpha_i (x_i - h)}_2^2 \\
     & = \frac{n}{2} - 2 \mnorm{\sum_{i = 1}^m \alpha_i x_i - h}_2^2.
  \end{align*}
Maximizing $I(\mu)$ then gives the result.

One consequence of Theorem~\ref{Main} is that the value of $M(X)$ for $X \subseteq H_n$ is determined by the $M$-constant of any maximal affinely independent subset $Y$ of $X$. (Since, by Theorem~\ref{HnStrict}, a maximal affinely independent subset of $X$ is also a maximal subset of strict $1$-negative type, this can also be deduced from Theorem~2.7 of \cite{NWF}.)
Such a set $Y$ may be much smaller than $X$, and furthermore the value of $M(Y)$ may be calculated algorithmically rather than by an optimization process. Finding a suitable affinely independent subset can be easily done using Gaussian elimination. The distance matrix for $Y$ is then invertible, and Theorem~\ref{conj-ans} implies that $M(X) = M(Y) = (\ip<D_Y^{-1}\1,\1>)^{-1}$.

Alternatively, if $Y = \{y_0,\dots,y_m\}$ and $v_i = y_i - y_0$, $i=1,\dots, m$, then one may use (\ref{basis-sphere}) to compute the centre $c$ of the sphere in $\text{span}(v_1,\dots,v_m)$ containing the points $\zero,v_1,\dots,v_m$. Then $M(X) = M(Y) = 2 \norm{c}_2^2$. In the case that $X$ is affinely independent, one may therefore use Theorem~\ref{conj-sol}, the proof of Theorem~\ref{Main}, and Pythagoras
to see that $\ip<D^{-1}\1,\1>$ is equal to $(2r^2)^{-1}$ where $r$ is the radius of the smallest sphere containing all the points in $X$.

We finish with two small examples which illustrate Remark~\ref{Rem1} concerning the lack of an obvious relationship between the distance matrix $D$ and the subspace $Z_X$ which appear on the two sides of Equation~(\ref{Dinv1-1}).

\begin{ex}
Let $X_1 = \{(0,0,0),(1,1,1)\}$ and let $X_2 = X_1 \cup \{(1,0,0)\}$. In this case $Z_{X_1}$ is different to $Z_{X_2}$. The point $h$ lies in both subspaces and hence $M(X_1) = M(X_2) = \frac{3}{2}$. Of course the distance matrices are quite different with
  \[ D_{X_1}^{-1} = \begin{pmatrix}
         0   & \tfrac{1}{3} \\
     \tfrac{1}{3} &  0
       \end{pmatrix},
       \qquad
     D_{X_2}^{-1} = \begin{pmatrix}
      -\tfrac{1}{3}  & \tfrac{1}{6}   & \tfrac{1}{2} \\
       \tfrac{1}{6}  & -\tfrac{1}{12} & \tfrac{1}{4} \\
       \tfrac{1}{2}  & \tfrac{1}{4}   & -\tfrac{3}{4}
       \end{pmatrix},
       \]
but the sum of the entries of each matrix inverse is $\frac{2}{3}$.
\end{ex}

\begin{ex}
Let $X_1 = \{(0,0,0),(1,0,0),(0,1,0)\}$ and let $X_2 = X_1 \cup \{(1,1,0)\}$. Then $Z_{X_1} = Z_{X_2}$ and so by Theorem~\ref{Main} we must have $M(X_1) = M(X_2)$. Here $X_1$ is affinely independent and $\ip<D_{X_1}^{-1} \1,\1> = M(X_1)^{-1} = 1$. However $X_2$ is not affinely independent and $D_{X_2}$ is not invertible. (Using Lagrange multipliers, one can confirm, directly from the definition, that $M(X_2) = 1$. Alternatively, one may use Theorem~\ref{Theorem1} since $X_2$ certainly lies in a sphere with centre in $Z_{X_2}$ and radius $1/\sqrt{2}$.)
\end{ex}

\end{document}